\newcommand\blfootnote[1]{%
  \begingroup
  \renewcommand\thefootnote{}\footnote{#1}%
  \addtocounter{footnote}{-1}%
  \endgroup
}
\definecolor{vegasgold}{rgb}{0.77, 0.7, 0.35}
\definecolor{darkgoldenrod}{rgb}{0.72, 0.53, 0.04}
\definecolor{gold(metallic)}{rgb}{0.83, 0.69, 0.22}
\DeclareFontFamily{U}{wncy}{}
\DeclareFontShape{U}{wncy}{m}{n}{<->wncyr10}{}
\DeclareSymbolFont{mcy}{U}{wncy}{m}{n}
\DeclareMathSymbol{\Sh}{\mathord}{mcy}{"58}
\newtheorem{theorem}{Theorem}[section]
\newtheorem{lem}[theorem]{Lemma}
\newtheorem{cor}[theorem]{Corollary}
\newtheorem{prop}[theorem]{Proposition}
\newtheorem{conj}[theorem]{Conjecture}
\newtheorem{rem}[theorem]{Remark}
\newcommand{\cF}{\mathcal{F}}
\newcommand{\Z}{\mathbb{Z}}
\newcommand{\Q}{\mathbb{Q}}
\newcommand{\op}[1]{\operatorname{#1}}
 \newcommand{\cFiq}{\mathcal{F}^{\operatorname{IQ}}}
\numberwithin{equation}{section}
\begin{document}

\title[Distribution of Iwasawa invariants of imaginary quadratic fields]{A note on the distribution of Iwasawa invariants of imaginary quadratic fields}

\author[A.~Ray]{Anwesh Ray}
\address[Ray]{Centre de recherches mathématiques,
Université de Montréal,
Pavillon André-Aisenstadt,
2920 Chemin de la tour,
Montréal (Québec) H3T 1J4, Canada}
\email{anwesh.ray@umontreal.ca}

\maketitle

\begin{abstract}

Given an odd prime number $p$ and an imaginary quadratic field $K$, we establish a relationship between the $p$-rank of the class group of $K$, and the classical $\lambda$-invariant of the cyclotomic $\mathbb{Z}_p$-extension of $K$. Exploiting this relationship, we prove statistical results for the distribution of $\lambda$-invariants for imaginary quadratic fields ordered according to their discriminant. Some of our results are conditional since they rely on the original Cohen--Lenstra heuristics for the distribution of the $p$-parts of class groups of imaginary quadratic fields. Some results are unconditional results ad are obtained by leveraging theorems of Byeon, Craig and others.
\end{abstract}
\blfootnote{Competing interest declaration: The author is employed at the Centre de recherches mathematiques Montreal, advised by Matilde Lalin and Antonio Lei.}
\section{Introduction}\label{s:1}
\par Throughout, $p$ will denote an odd prime number and $K$ an imaginary quadratic field. We denote by $\Z_p$ the ring of $p$-adic integers. An infinite Galois extension $K_\infty/K$ is said to be a \emph{$\Z_p$-extension} if the Galois group $\op{Gal}(K_\infty/K)$ is isomorphic to $\Z_p$ as a topological group. There is one $\Z_p$-extension of classical interest, namely the \emph{cyclotomic $\Z_p$-extension} of $K$, which we denote by $K_{\op{cyc}}$. It is defined to be the unique $\Z_p$-extension of $K$ which is contained in the infinite cyclotomic extension of $K$ generated by the $p$-power roots of unity $\mu_{p^\infty}$. We set $K_n$ to denote the \emph{$n$-th layer}, i.e., the subextension of $K_{\op{cyc}}$ such that $[K_n:K]=p^n$. Iwasawa studied the variation of $p$-primary parts of class groups up the cyclotomic tower. More precisely, let $h_{K_n}$ be the class number of $K_n$ and $e_n$ be the maximal power of $p$ that divides $h_{K_n}$. Iwasawa showed that there exist invariants $\mu_p(K), \lambda_p(K)\in \Z_{\geq 0}$ and $\nu_p(K)\in \Z$ such that \[e_n=\mu_p(K) p^n+\lambda_p(K)n+\nu_p(K)\] for $n\gg 0$ (cf. \cite{iwasawa1973zl}). The celebrated result of Ferrero and Washington (cf. \cite{ferrero1979iwasawa}) shows that $\mu_p(K)=0$. In fact, this conclusion is shown to hold for all abelian number field extensions $K/\Q$.
\par In this note, we study the distribution of the Iwasawa $\lambda$-invariant for a fixed prime $p$ and for varying imaginary quadratic fields $K$ ordered by their discriminant. The results are proven by establishing a link between the \emph{$p$-rank} of the class group of $K$ and the $\lambda$-invariant (cf. Lemma \ref{lemma link}). Cohen and Lenstra \cite{cohen1984heuristics} made predictions for the distribution for the $p$-Sylow subgroup of imaginary quadratic fields. These predictions are supported by random matrix heuristics, cf. \cite{washington1986some}, and have led to many significant research directions in the field of \emph{arithmetic statistics}. We note that heuristics modelling the distribution of classical Iwasawa invariants of imaginary quadratic fields in which $p$ does not split were formulated by Ellenberg-Jain-Venkatesh \cite{ellenberg2011modeling}. Such heuristics are based on the prediction that the $p$-adic characteristic power series associated to the Iwasawa module structure of class group towers behave randomly as $K$ varies over all imaginary quadratic fields (in which $p$ does not split). There is some numerical evidence in support of such heuristics for the prime $p=3$ (cf. the table on p.3 in \emph{loc. cit.}), however, it should be noted that obtaining such data for Iwasawa invariants of imaginary quadratic fields is more difficult than obtaining data for their class groups. Our results demonstrate that the original Cohen--Lenstra heuristics for class groups have direct implications towards modelling the behavior of $\lambda_p(K)$. Given a family of imaginary quadratic fields $\cF$, and a positive real number $x>0$, let $\cF(x)$ consist of all $K\in \cF$ for which the absolute discriminant $|\Delta_K|$ is $\leq x$. It is easy to see that the set $\cF(x)$ is finite. Let $\cFiq$ be the family of all imaginary quadratic fields. The \emph{density} of fields in a family $\cF$ is the following limit, provided it exists
\[\mathfrak{d}(\cF):=\lim_{x\rightarrow \infty} \left(\frac{\# \cF(x)}{\# \cF^{\op{IQ}}(x)}\right). \] The upper and lower densities are defined respectively as follows
\[\overline{\mathfrak{d}}(\cF):=\limsup_{x\rightarrow \infty} \left(\frac{\# \cF(x)}{\# \cF^{\op{IQ}}(x)}\right)\text{ and }\underline{\mathfrak{d}}(\cF):=\liminf_{x\rightarrow \infty} \left(\frac{\# \cF(x)}{\# \cF^{\op{IQ}}(x)}\right).\]
 Note that the upper and lower densities do exist unconditionally. Given a natural number $n$, we show that the conjecture of Cohen and Lenstra (cf. Conjecture \ref{conjecture 0}) implies an explicit lower bound for the lower density of the set of imaginary quadratic fields $K$ for which $\lambda_p(K)\geq n$. Let $\cF(p, \lambda\geq n)$ be the family of imaginary quadratic fields $K$ for which $\lambda_p(K)\geq n$. We set $\underline{\mathfrak{d}}(p, \lambda\geq n)$ to denote the lower density of $\cF(p, \lambda\geq n)$. Theorem \ref{main theorem}, which is the main result of the article, asserts that
 \[\underline{\mathfrak{d}}(p, \lambda\geq n)\geq 1-\prod_{i=1}^\infty \left(1-p^{-i}\right)\left(1+\sum_{j=1}^{n-1} \left(p^{-j^2}\prod_{k=1}^j\left(1-p^{-k}\right)^{-2}\right)\right),\] provided the above mentioned conjecture holds. 

\par In addition, we are able to prove a number of new unconditional results as well, cf. Theorem \ref{unconditional}. We show that for any finite set of odd primes $S$, there are infinitely many imaginary quadratic fields $K$ for which $\lambda_p(K)\geq 2$ for all $p\in S$. Furthermore, it is shown that there are infinitely many imaginary quadratic fields $K$ such that $\lambda_3(K)\geq 4$. These consequences are proven by leveraging unconditional results for the distribution of $p$-ranks of class groups of imaginary quadratic fields proven by Byeon \cite{byeon2006imaginary} and Craig \cite{craig1977construction}. There is considerable interest in the study of the distribution of $p$-ranks of class groups of imaginary quadratic fields, cf. the recent preprints \cite{kulkarni2021hilbert, chattopadhyay2021p}. The relationship between the $\lambda$-invariant and $p$-ranks of class groups established in this note shows that the aforementioned results have interesting implications to Iwasawa theory of class groups.

\par We describe some related work here. Horie proved (unconditionally) the infinitude of trivial $\lambda$-invariants $\lambda_p(K)$ for imaginary quadratic fields $K$, cf. \cite[Theorem 2]{horie1987note}. In greater detail, Horie showed that for any prime number $p$, there is an infinitude of imaginary quadratic fields $K$ in which $p$ is inert and $\lambda_p(K)=0$. Of related interest is the work of Jochnowitz \cite{jochnowitz1994p}, whose results have implications to the Iwasawa $\lambda$-invariants of $p$-adic L-functions over imaginary quadratic fields $K$. On the other hand, Sands \cite{sands1993non} showed that for any fixed odd prime number $p$, there is an infinitude of imaginary quadratic fields $K$ in which $p$ splits and $\lambda_p(K)\geq 2$. Note that for $p=3$, our method shows that $\lambda_3(K)\geq 4$ for infinitely many imaginary quadratic fields $K$. The Theorem \ref{unconditional} should also be seen as a refinement of Sands' result since it established simultaneous non-triviality of $\lambda_p(K)$ for a fixed set of odd primes $p$, see the Remark \ref{remark}.

\subsection*{Acknowledgment} The author's research is supported by the CRM Simons postdoctoral fellowship. He thanks the referee for helpful suggestions.

\section{Preliminaries}
This section is dedicated to setting up notation, introducing preliminary notions and proving some basic results.
\subsection{Distribution of $p$-ranks of class groups of imaginary quadratic fields}
\par Let $K$ be an imaginary quadratic field, set $\Delta_K$ to denote its discriminant. Fix a prime number $p$ and assume that $p$ is odd. Denote by $\op{Cl}(K)$ the class group of $K$, and $h_K$ the class number. The \emph{$p$-rank} of $\op{Cl}(K)$ is the dimension of $\frac{\op{Cl}(K)}{p\op{Cl}(K)}$ as a vector space over $\Z/p\Z$. We denote the $p$-rank of $K$ by $\op{r}_p(K)$. Set $\op{Cl}_p(K)$ to denote the $p$-Sylow subgroup of $\op{Cl}(K)$. We refer to a set of imaginary quadratic fields $\cF$ with a common distinguishing property as a \emph{family}. Given a family of imaginary quadratic fields $\cF$, and a positive real number $x>0$, set $\cF(x)$ to consist of all $K\in \cF$ such that $|\Delta_K|\leq x$.
\par Let $\cFiq$ be the family of all imaginary quadratic fields. Recall that $\mathfrak{d}(\cF)$ denotes the density of $\cF$, provided it exists, as defined below
\[\mathfrak{d}(\cF):=\lim_{x\rightarrow \infty} \left(\frac{\# \cF(x)}{\# \cF^{\op{IQ}}(x)}\right). \]
Given a prime $p$, denote by $\cF_{p,n}$ (resp. $\cF_{p,\geq n}$) the family of all imaginary quadratic fields $F$ for which $r_p(K)=n$ (resp. $r_p(K)\geq n$).
 
 \par Cohen and Lenstra give a model that predicts the distribution of class groups of imaginary quadratic fields. Given a finite abelian $p$-group $B$, set $\cF_B$ to consist of the imaginary quadratic fields with $\op{Cl}_p(K)$ isomorphic to $B$. 
 \begin{conj}[Cohen--Lenstra]\label{conjecture 0}
 The density $\mathfrak{d}(\cF_B)$ exists and is given by
 \[\mathfrak{d}(\cF_B)=\frac{\prod_{k=1}^\infty \left(1-p^{-k}\right)}{\# \op{Aut}(B)},\]where $\op{Aut}(B)$ denotes the automorphism group of $B$.
 \end{conj}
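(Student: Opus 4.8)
Conjecture~\ref{conjecture 0} is the Cohen--Lenstra heuristic in its original formulation, and it is open even for a single odd prime $p$; so a ``proof'' here can only mean (i) deriving the predicted density from the Cohen--Lenstra philosophy, and (ii) assembling the moment evidence that supports it. For the purposes of this note the conjecture is used as a hypothesis, as the abstract indicates. I would nonetheless organise the heuristic justification as follows.

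The first step is to explain the shape of the formula. The Cohen--Lenstra principle posits that a finite abelian $p$-group $B$ occurs as $\op{Cl}_p(K)$ with frequency proportional to $1/\#\op{Aut}(B)$ --- that is, isomorphism classes are counted with their groupoid mass $1/\#\op{Aut}(B)$ rather than with multiplicity one. To normalise this into a probability measure on the (countable) set of isomorphism classes of finite abelian $p$-groups, one divides by the total mass $\sum_{B}1/\#\op{Aut}(B)$. The key input is the classical mass formula
\[\sum_{B}\frac{1}{\#\op{Aut}(B)}=\prod_{k=1}^{\infty}\left(1-p^{-k}\right)^{-1},\]
where $B$ runs over isomorphism classes of finite abelian $p$-groups; one proves it by summing over partitions $\lambda$ (writing $B\cong\bigoplus_i \Z/p^{\lambda_i}\Z$) using the explicit formula for $\#\op{Aut}(B)$ in terms of $\lambda$, or via a $q$-series identity (cf. \cite{cohen1984heuristics}). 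Dividing by this product yields precisely $\mathfrak{d}(\cF_B)=\prod_{k\ge1}\left(1-p^{-k}\right)/\#\op{Aut}(B)$.

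The second step is to anchor this guess in the Friedman--Washington random matrix model: as $n\to\infty$, the cokernel of a Haar-random matrix in $\mathrm{Mat}_n(\Zp)$ converges in distribution to a random finite abelian $p$-group with law $\mathbb{P}(B)=\prod_{k\ge1}\left(1-p^{-k}\right)/\#\op{Aut}(B)$, and Conjecture~\ref{conjecture 0} asserts that $\op{Cl}_p(K)$ for $K$ drawn uniformly from $\cFiq$ (ordered by $|\Delta_K|$) obeys the same law. One then checks the moment predictions: this law is the unique probability measure on finite abelian $p$-groups all of whose surjection-moments $\mathbb{E}\left[\#\mathrm{Sur}(B,G)\right]$ equal $1$ for every finite abelian $p$-group $G$, and the relevant low cases are known --- $\mathbb{E}\left[\#\mathrm{Sur}(\op{Cl}(K),\Z/3\Z)\right]=1$ for imaginary quadratic $K$ is (a reformulation of) the Davenport--Heilbronn theorem, with higher $3$-moments supplied by the work of Bhargava and collaborators.

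The main obstacle is that a genuine proof is beyond current reach. Controlling the full distribution of $\op{Cl}_p(K)$ amounts, via class field theory, to counting number fields (equivalently, unramified $p$-extensions) attached to imaginary quadratic base fields with prescribed invariants, and apart from the $3$-torsion of quadratic fields the required asymptotics are unknown; indeed the conjecture is not established for any single prime $p\ge 3$. Consequently I would not attempt to prove Conjecture~\ref{conjecture 0}, only to motivate it and to record the moment evidence, and then --- as in the sequel --- to deduce from it the density statements for $\lambda_p(K)$ via Lemma~\ref{lemma link}.
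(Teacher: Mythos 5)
You have correctly identified that this statement is a conjecture, not a theorem: the paper offers no proof, merely stating it with a citation to Cohen--Lenstra and then using it as the hypothesis of Theorem \ref{main theorem}. Your heuristic derivation --- normalizing the groupoid mass $1/\#\op{Aut}(B)$ by the mass formula $\sum_B 1/\#\op{Aut}(B)=\prod_{k\geq 1}(1-p^{-k})^{-1}$ --- together with the random-matrix and moment evidence you record, is accurate and consistent with how the paper treats the statement, so there is nothing to fault and nothing in the paper to compare it against.
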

 The following prediction is seen to follow as a consequence, cf. \cite[(C2),(C5), p.56]{cohen1984heuristics}.
 \begin{conj}\label{conjecture}
 The densities $\mathfrak{d}(\cF_{p,\geq 1})$ and $\mathfrak{d}(\cF_{p,n})$ exist and are given by 
 \[\begin{split}
     &\mathfrak{d}(\cF_{p,\geq 1})=1-\prod_{i=1}^\infty \left(1-p^{-i}\right),\\
     &\mathfrak{d}(\cF_{p,n})=p^{-n^2}\prod_{i=1}^\infty \left(1-p^{-i}\right)\times \prod_{k=1}^n \left(1-p^{-k}\right)^{-2}.
 \end{split}\]
 \end{conj}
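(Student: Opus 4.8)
The plan is to derive both formulas from Conjecture \ref{conjecture 0} exactly as in \cite[(C2),(C5)]{cohen1984heuristics}. Write $c_p:=\prod_{k=1}^\infty(1-p^{-k})$, so that Conjecture \ref{conjecture 0} reads $\mathfrak{d}(\cF_B)=c_p/\#\op{Aut}(B)$ for every finite abelian $p$-group $B$.

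For the first formula no infinite summation is needed. I would observe that $\cF_{p,0}$ is exactly $\cF_{\{0\}}$, the family of imaginary quadratic fields with trivial $p$-class group, and $\#\op{Aut}(\{0\})=1$, so Conjecture \ref{conjecture 0} gives $\mathfrak{d}(\cF_{p,0})=c_p$. Since $\cF_{p,0}$ and $\cF_{p,\geq 1}$ partition $\cFiq$, one has $\#\cF_{p,0}(x)+\#\cF_{p,\geq 1}(x)=\#\cFiq(x)$ for every $x>0$; dividing by $\#\cFiq(x)$ and letting $x\to\infty$ shows $\mathfrak{d}(\cF_{p,\geq 1})=1-c_p$, which is the asserted value.

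For the second formula, the $p$-rank $r(B)$ of a finite abelian $p$-group is an isomorphism invariant, and $\cF_{p,n}=\bigsqcup_{r(B)=n}\cF_B$; for a fixed $x$ this is a \emph{finite} union because $\#B\leq h_K=O\!\left(\sqrt{|\Delta_K|}\log|\Delta_K|\right)$ bounds the groups that can occur, so $\#\cF_{p,n}(x)/\#\cFiq(x)=\sum_{r(B)=n}\#\cF_B(x)/\#\cFiq(x)$ for each $x$. The step I expect to be the crux is interchanging $\lim_{x\to\infty}$ with this sum, i.e.\ ruling out ``escape of mass'' into groups of unbounded order; the input making this legitimate is the classical identity $\sum_B\#\op{Aut}(B)^{-1}=\prod_{k\geq 1}(1-p^{-k})^{-1}$, equivalently $\sum_B\mathfrak{d}(\cF_B)=1$, after which a routine Fatou-lemma argument (all terms positive, partial sums bounded by $1$) gives that the limit exists and $\mathfrak{d}(\cF_{p,n})=c_p\sum_{r(B)=n}\#\op{Aut}(B)^{-1}$.

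It then remains to evaluate $S_n:=\sum_{r(B)=n}\#\op{Aut}(B)^{-1}$. I would parametrize the $p$-rank-$n$ groups by partitions $\lambda=(\lambda_1\geq\cdots\geq\lambda_n\geq 1)$ with exactly $n$ parts, $B_\lambda=\bigoplus_{i=1}^n\Z/p^{\lambda_i}\Z$, insert the standard closed form for $\#\op{Aut}(B_\lambda)$ in terms of the conjugate partition, and use the shift $\lambda\mapsto(\lambda_1-1,\dots,\lambda_n-1)$ (which realizes $pB_\lambda$) to reduce the sum to the recursion
\[S_n=p^{-n^2}\sum_{r=0}^{n}\frac{S_r}{\prod_{i=1}^{n-r}(1-p^{-i})},\qquad S_0=1,\]
which one solves by induction on $n$ to obtain $S_n=p^{-n^2}\prod_{k=1}^n(1-p^{-k})^{-2}$ (equivalently, a Durfee-square type $q$-series identity with $q=p^{-1}$, also recorded in \cite{cohen1984heuristics}). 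Substituting back yields $\mathfrak{d}(\cF_{p,n})=c_p\,p^{-n^2}\prod_{k=1}^n(1-p^{-k})^{-2}$, as asserted; the only genuine difficulty is the no-escape-of-mass step, everything else being bookkeeping with the automorphism-count formula.
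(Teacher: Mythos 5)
The paper does not actually prove this statement: it is labelled a conjecture and is justified only by the remark that it ``is seen to follow as a consequence'' of Conjecture \ref{conjecture 0}, together with a pointer to \cite[(C2),(C5), p.56]{cohen1984heuristics}. Your proposal supplies the derivation that the paper delegates to that citation, and it is correct. The first formula is immediate as you say, since $\cF_{p,0}=\cF_{\{0\}}$, $\op{Aut}(\{0\})$ is trivial, and $\cF_{p,0}$, $\cF_{p,\geq 1}$ partition $\cFiq$. For the second, you correctly isolate the one non-formal step: passing from the densities of the individual $\cF_B$ to the density of the countable disjoint union $\cF_{p,n}=\bigsqcup_{r(B)=n}\cF_B$. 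Your resolution is the right one: Fatou gives $\underline{\mathfrak{d}}(\cF_{p,n})\geq c_p S_n$ for every $n$ simultaneously, and since the identity $\sum_B\#\op{Aut}(B)^{-1}=\prod_{k\geq 1}(1-p^{-k})^{-1}$ forces these lower bounds to sum to $1$ while $\sum_n\#\cF_{p,n}(x)=\#\cFiq(x)$ for each $x$, the complementary estimate $\overline{\mathfrak{d}}(\cF_{p,n})\leq 1-\sum_{m\neq n}\underline{\mathfrak{d}}(\cF_{p,m})$ squeezes each density to its predicted value; no separate finiteness of the union at each level $x$ is even needed. The closed form $S_n=p^{-n^2}\prod_{k=1}^n\left(1-p^{-k}\right)^{-2}$ is the classical Hall/Cohen--Lenstra computation, and it is consistent with (indeed equivalent to) the identity \eqref{3.1} quoted later in the paper, since summing your $S_n$ over all $n\geq 0$ with $q=p^{-1}$ recovers $\prod_{i}\left(1-q^{i}\right)^{-1}$. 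The only soft spot is that your recursion for $S_n$ is asserted rather than derived; it checks out for small $n$ and the automorphism-count bookkeeping behind it is standard, so I would accept the argument as a complete proof of the implication from Conjecture \ref{conjecture 0}.
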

Note that Conjecture \ref{conjecture} implies that $\mathfrak{d}(\cF_{p,\geq n})$ exists and is given by 
\begin{equation}\label{e1}\begin{split}&\mathfrak{d}(\cF_{\geq n})=\mathfrak{d}(\cF_{p,\geq 1})-\sum_{j=1}^{n-1} \mathfrak{d}(\cF_{p,j})\\
=& 1-\prod_{i=1}^\infty \left(1-p^{-i}\right)\left(1+\sum_{j=1}^{n-1} \left(p^{-j^2}\prod_{k=1}^j\left(1-p^{-k}\right)^{-2}\right)\right). \end{split}\end{equation}

\subsection{Iwasawa invariants}
\par Let $K$ be an imaginary quadratic field and $p$ be a prime number. Set $K_{\op{cyc}}$ to denote the cyclotomic $\Z_p$-extension of $K$ and let $K_n$ be it's $n$-th layer. Assume that $p$ is odd. Let $L_n$ (resp. $L_\infty$) be the maximal unramified abelian pro-$p$ extension of $K_n$ (resp. $K_{\op{cyc}}$). Set $X_n$ (resp. $X_\infty$) denote the Galois group $\op{Gal}(L_n/K_n)$ (resp. $\op{Gal}(L_\infty/K_{\op{cyc}}))$. Given $m\geq n$, there is a natural map $X_m\rightarrow X_n$, which factors as follows
\[X_m\rightarrow \op{Gal}(K_m \cdot L_n/K_m)\rightarrow \op{Gal}(L_n/K_m\cap L_n)\rightarrow X_n.\] Note that the map $X_m\rightarrow X_n$ is surjective if $K_m\cap L_n=K_n$. We identify $X_\infty$ with the inverse limit $\varprojlim_n X_n$. Let $\Gamma_n$ (resp. $\Gamma$) denote the Galois group $\op{Gal}(K_n/K)$ (resp. $\op{Gal}(K_{\op{cyc}}/K)$); note that $X_n$ (resp. $X_\infty$) is a module over $\Gamma_n$ (resp. $\Gamma$). By Class field theory, $X_n$ is isomorphic to $\op{Cl}_p(K_n)$. 
\par The \emph{Iwasawa algebra} $\Lambda(\Gamma)$ is the completed pro-$p$ algebra 
\[\Lambda(\Gamma):=\varprojlim_n \Z_p\left[\Gamma_n\right].\]
As is well known, $X_\infty$ is a finitely generated and torsion module over $\Lambda(\Gamma)$ (cf. \cite[Lemma 13.18, Proposition 13.20, Lemma 13.21]{washington1997introduction}). A map of $\Lambda(\Gamma)$-modules $M_1\rightarrow M_2$ is a \emph{pseudo-isomorphism} if its kernel and cokernel are finite groups.

\par Fix a topological generator $\gamma$ of $\Gamma$, and identify $\Lambda(\Gamma)$ with the power series ring $\Z_p\llbracket T \rrbracket$ letting $T:=(\gamma-1)$. A polynomial $f(T)\in \Z_p\llbracket T\rrbracket$ is said to be \emph{distinguished} if it is a monic polynomial whose non-leading coefficients are units in $\Z_p$. According to the structure theorem of finitely generated torsion $\Lambda(\Gamma)$-modules (cf. \cite[Theorem 3.12]{washington1997introduction}), there is a pseudomorphism 
\[X_\infty \longrightarrow  \left(\bigoplus_{i=1}^s\frac{\Z_p\llbracket T\rrbracket}{(p^{\mu_i})}\right) \oplus \left(\bigoplus_{j=1}^t \frac{\Z_p\llbracket T\rrbracket}{(f_i(T)^{\lambda_i})}\right),\]
where $\mu_i, \lambda_j$ are positive integers, and $f_j(T)$ are irreducible distinguished polynomials. The $\mu$-invariant $\mu_p(K)$ is the sum of the entries $\sum_{i=1}^s \mu_i$ if $s>0$, and is set to be $0$ if $s=0$. Likewise, the $\lambda$-invariant $\lambda_p(K)$ is $\sum_{i=1}^s \lambda_i \op{deg}(f_i)$ if $s>0$, and defined to be $0$ if $t=0$.

\begin{theorem}[Ferrero--Washington]
With respect to notation above, $\mu_p(K)=0$.
\end{theorem}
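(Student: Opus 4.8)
The plan is to move the problem to the analytic side and then reduce the vanishing of $\mu_p(K)$ to a combinatorial statement about base-$p$ expansions, which is the route of Ferrero and Washington. Since $p$ is odd, the nontrivial automorphism of $K/\Q$ splits $X_\infty$ as a direct sum $X_\infty^+\oplus X_\infty^-$ of $\Lambda(\Gamma)$-modules. The plus part is governed by the Iwasawa module of the cyclotomic $\Z_p$-extension of $\Q$, which is trivial (because $p$ is totally ramified in $\Q_{\op{cyc}}/\Q$ while $\Q$ has class number one), so $\mu_p(X_\infty^+)=0$ and it suffices to prove $\mu_p(X_\infty^-)=0$. At this point one invokes Iwasawa's theorem that the algebraic $\mu$-invariant of the minus part equals the $\mu$-invariant of the associated Kubota--Leopoldt $p$-adic $L$-function; this equality is unconditional, being a consequence of the analytic class number formula applied along the cyclotomic tower together with the interpolation property of $L_p$. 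Concretely, $\mu_p(X_\infty^-)$ is the $\mu$-invariant of the Iwasawa power series $g_\chi(T)\in\Z_p[\chi]\llbracket T\rrbracket$ attached to the odd quadratic character $\chi=\chi_K$ (and its Teichm\"uller twists).

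The second step is to make the coefficients of $g_\chi(T)$ explicit. Writing out the associated Stickelberger-type element and expanding, the coefficients of $g_\chi$ become $\Z_p$-linear combinations of sums of the form $\sum_{a}\chi(a)\,\varepsilon_j(a)$, where $\varepsilon_j(a)$ denotes the $j$-th base-$p$ digit of a suitable rational number whose denominator is prime to $p$ (equivalently, of the shape $p^f-1$). Consequently, $\mu_p(X_\infty^-)>0$ would force \emph{every} one of these coefficient sums to be divisible by $p$; that is, it would force the power series whose coefficients record these character-weighted base-$p$ digits to vanish identically modulo $p$.

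The third and decisive step is to rule this out --- and this is where the genuine difficulty lies. One must show that such a power series, built from the base-$p$ digits of $b/(p^f-1)$ for $0<b<p^f-1$ weighted by a nontrivial character, is in fact a $p$-adic unit. This is the Ferrero--Washington lemma, proved by an intricate counting argument in base-$p$ arithmetic: one analyses the cyclic digit sequences of $b,\ pb,\ p^2b,\dots\pmod{p^f-1}$ and the carries produced when appropriate translates of these sequences are added, and shows that the averaged behaviour of the digits over a full period of length $f$ is too rigid to be compatible with the simultaneous congruences modulo $p$ that $\mu>0$ would demand. An alternative, more structural route to the same conclusion is Sinnott's proof, which realises $g_\chi$ as the $\Gamma$-transform of an explicit rational function in $\Z_p[\mu_N]((X))$ and computes the $\mu$-invariant of such transforms by exploiting the invariance of $\mu$ under a natural group action, reducing ultimately to the transparent case of $1/(1-X)$. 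Granting the Ferrero--Washington lemma, the coefficient sums of the second step are not all divisible by $p$, hence $\mu_p(X_\infty^-)=0$ and therefore $\mu_p(K)=0$; the same argument works verbatim for any abelian $K/\Q$, since it only uses a general Dirichlet character $\chi$. In practice one simply cites \cite{ferrero1979iwasawa} for this input.
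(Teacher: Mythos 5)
The paper offers no proof of this theorem beyond citing \cite{ferrero1979iwasawa}, and your sketch is a correct and faithful outline of the argument in that reference: the splitting $X_\infty=X_\infty^+\oplus X_\infty^-$ with $X_\infty^+=0$ (for odd $p$, the plus part is controlled by the trivial Iwasawa module of $\Q$), Iwasawa's unconditional identification of $\mu(X_\infty^-)$ with the analytic $\mu$-invariant of the Kubota--Leopoldt $p$-adic $L$-function via the relative class number formula along the tower, and the reduction to the non-vanishing modulo $p$ of character-weighted base-$p$ digit sums. The one step you do not actually prove --- the combinatorial Ferrero--Washington lemma on digit sequences (or Sinnott's $\Gamma$-transform substitute) --- is precisely the content of the cited paper, and you acknowledge this explicitly, so nothing is missing relative to what the paper itself provides.
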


Infact, the result of Ferrero and Washington (cf. \cite{ferrero1979iwasawa}) applies to all abelian number fields $K$. As a consequence, $X_\infty(K)$ is a finitely generated $\Z_p$-module of rank $\lambda_p(K)$. In fact, a stronger result is true in this setting. 
\begin{theorem}
Let $K$ be an imaginary quadratic field and $p$ be an odd prime. Then, $X_\infty(p, K)$ is isomorphic to $\Z_p^{\lambda_p(K)}$.
\end{theorem}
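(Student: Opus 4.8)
The plan is to combine the Ferrero--Washington theorem stated above with the classical fact that, for the cyclotomic tower, the Iwasawa module $X_\infty$ contains no nonzero finite $\Lambda(\Gamma)$-submodule. By Ferrero--Washington we have $\mu_p(K)=0$, so, as already observed above, $X_\infty$ is a finitely generated $\Z_p$-module of rank $\lambda:=\lambda_p(K)$; such a module is isomorphic to $\Z_p^{\lambda}$ precisely when it is $\Z_p$-torsion-free, so it suffices to prove the latter. Now the $\Z_p$-torsion submodule $Z\subseteq X_\infty$ is finite, because $X_\infty$ is finitely generated over $\Z_p$, and it is stable under $\Gamma$ (if $p^Nz=0$ then $p^N\gamma z=\gamma(p^Nz)=0$), so $Z$ is a $\Lambda(\Gamma)$-submodule of $X_\infty$; conversely, every finite $\Lambda(\Gamma)$-submodule of $X_\infty$ is $\Z_p$-torsion. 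Hence $X_\infty\cong\Z_p^{\lambda}$ if and only if $X_\infty$ has no nonzero finite $\Lambda(\Gamma)$-submodule, and it is the latter statement that I would establish.

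The arithmetic input is a ramification fact: the cyclotomic $\Z_p$-extension $K_{\op{cyc}}=K\cdot\Q_{\op{cyc}}$ is totally ramified at every prime of $K$ above $p$, and unramified at all other primes. Indeed $\Q_{\op{cyc}}/\Q$ is totally ramified at $p$ (being contained in $\Q_p(\mu_{p^\infty})/\Q_p$) and unramified outside $p$; since $p$ is odd, each local degree $[K_{\mathfrak p}:\Q_p]$ with $\mathfrak p\mid p$ is prime to $p$, and a base-change computation then shows that the completed local extension above $\mathfrak p$ remains totally ramified, while any prime $\mathfrak q\nmid p$ is unramified since $K/\Q$ is unramified at $\mathfrak q$. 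Granting this, the absence of a nonzero finite $\Lambda(\Gamma)$-submodule of $X_\infty$ is classical: when exactly one prime of $K$ lies above $p$, i.e.\ when $p$ is inert or ramified in $K$, this is \cite[Proposition 13.28]{washington1997introduction}, and when $p$ splits in $K$ one runs the same argument in the presence of two totally ramified primes. Combining these observations yields $X_\infty\cong\Z_p^{\lambda_p(K)}$.

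I expect the one genuine obstacle to be the case in which $p$ splits in $K$, since the reference just invoked is stated for a single ramified prime. To treat that case I would use the explicit description $\op{Cl}_p(K_n)\cong X_\infty/(\omega_n X_\infty+\nu_n Y_\infty)$, where $\omega_n=(1+T)^{p^n}-1$, $\nu_n=\omega_n/T$, and $Y_\infty$ is the $\Lambda(\Gamma)$-submodule of $X_\infty$ generated by $\sigma_2\sigma_1^{-1}$ for chosen generators $\sigma_1,\sigma_2$ of the inertia subgroups at the two primes above $p$ inside $\op{Gal}(L_\infty/K)$. A nonzero finite $\Lambda(\Gamma)$-submodule of $X_\infty$ would then force the order of the right-hand side to deviate from Iwasawa's asymptotic $e_n=\lambda_p(K)\,n+\nu_p(K)$ (valid for $n\gg 0$), which is a contradiction; all the remaining steps are formal.
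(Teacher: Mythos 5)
The paper itself disposes of this theorem with a one-line citation of \cite[Corollary 13.29]{washington1997introduction}, and your opening reduction is precisely the content behind that corollary: since $\mu_p(K)=0$ by Ferrero--Washington, $X_\infty$ is a finitely generated $\Z_p$-module of rank $\lambda_p(K)$, its $\Z_p$-torsion submodule coincides with its maximal finite $\Lambda(\Gamma)$-submodule, and freeness is therefore equivalent to the absence of nonzero finite $\Lambda(\Gamma)$-submodules. That part, together with the ramification facts, is correct.

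The gap lies in how you establish the absence of finite submodules. First, the reading of \cite[Proposition 13.28]{washington1997introduction} as ``exactly one totally ramified prime implies no finite submodules'' is not what that proposition asserts, and that implication is false in general: for a real quadratic field with $p$ inert and $p\mid h_K$, exactly one prime ramifies totally in the cyclotomic tower, yet $X_\infty$ can be finite and nonzero, hence is itself a nonzero finite $\Lambda(\Gamma)$-submodule. The actual content of that proposition (stated for $p$ odd and $K$ a CM field, with no hypothesis on how $p$ splits) is that the extension maps $A_n^-\to A_m^-$ on minus parts of $p$-class groups are injective, because the relevant cohomology of the unit groups vanishes on minus parts (the minus part of the units consists only of roots of unity); this unit-theoretic input is what rules out finite submodules of $X^-$, and for imaginary quadratic $K$ one has $X_\infty=X_\infty^-$ since the plus part is $X(\Q_{\op{cyc}})=0$. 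In particular the split case is already covered by the reference, but not by the reason you give. Second, and more seriously, your proposed substitute argument for the split case cannot work: Iwasawa's formula $e_n=\mu p^n+\lambda n+\nu$ is \emph{derived from} the isomorphism $\op{Cl}_p(K_n)\cong X_\infty/(\omega_nX_\infty+\nu_nY_\infty)$ that you invoke, and it holds for any finitely generated torsion $\Lambda(\Gamma)$-module in this situation whether or not finite submodules are present. A finite submodule $F$ only shifts the constant term: $X$ and $X/F$ have the same $\mu$ and $\lambda$, and the orders of the respective quotients differ by an eventually constant amount absorbed into $\nu_p(K)$, which is not otherwise constrained. Hence no ``deviation'' and no contradiction arises. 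To repair the argument you must bring in the capitulation/units step (or simply cite Corollary 13.29, as the paper does).
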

\begin{proof}
The above result is \cite[Corollary 13.29]{washington1997introduction}.
\end{proof}

Our main focus is to fix an odd prime $p$, and study the distribution of $\lambda_p(K)$ as $K$ varies over all imaginary quadratic fields ordered by discriminant. On the other hand, one may ask the dual question, i.e., fix an imaginary quadratic field and let $p$ vary over all odd primes and study the distribution of $\lambda_p(K)$. For the family of primes $p$ that are inert in $K$, the following well known result provides an answer.
\begin{prop}
Let $K$ be an imaginary quadratic field. Then, for all primes $p\nmid h_K$ that are inert or ramified in $K$, the module $X_\infty(K)$ is identically $0$. In particular, for all but finitely many primes $p$ that do not split in $K$, we have that $\lambda_p(K)=0$.
\end{prop}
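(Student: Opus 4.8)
The plan is to combine class field theory with Nakayama's lemma, exploiting the fact that $K_{\op{cyc}}/K$ has a single, totally ramified prime above $p$ when $p$ does not split in $K$.

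First I would pin down the ramification. Since $p$ is inert or ramified in $K$, there is a unique prime $\mathfrak p$ of $K$ above $p$, and because $K_{\op{cyc}}\subseteq K(\mu_{p^\infty})$, the extension $K_{\op{cyc}}/K$ is unramified outside $\mathfrak p$. To see that $\mathfrak p$ is totally ramified, write $K_{\op{cyc}}=K\cdot\Q_{\op{cyc}}$; as $\Q_{\op{cyc}}$ is totally real and $K$ is imaginary, $\op{Gal}(K_{\op{cyc}}/\Q)\cong\op{Gal}(K/\Q)\times\op{Gal}(\Q_{\op{cyc}}/\Q)\cong\Z/2\Z\times\Z_p$. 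The inertia subgroup $I_p$ of $p$ in this group surjects onto $\op{Gal}(\Q_{\op{cyc}}/\Q)\cong\Z_p$, since $p$ is totally ramified in $\Q_{\op{cyc}}/\Q$. Because $p$ is odd, $\Z_p$ has no subgroup of index $2$, and one checks $I_p\cap\op{Gal}(K_{\op{cyc}}/K)=\op{Gal}(K_{\op{cyc}}/K)$ in both cases: if $p$ is inert, the image of $I_p$ in $\op{Gal}(K/\Q)$ is trivial, so $I_p\subseteq\op{Gal}(K_{\op{cyc}}/K)$ and hence equals it; if $p$ is ramified, $I_p$ surjects onto both factors, which forces $I_p=\op{Gal}(K_{\op{cyc}}/\Q)$. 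Thus $\mathfrak p$ is totally ramified in $K_{\op{cyc}}/K$.

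Next I would invoke the standard structural input: when exactly one prime of $K$ ramifies in $K_{\op{cyc}}/K$ and it is totally ramified, the natural map $X_\infty\to X_0$ is surjective with kernel $TX_\infty$, so $X_\infty/TX_\infty\cong X_0$ (cf.\ \cite[\S 13]{washington1997introduction}). A direct proof runs as follows: one checks $L_0\subseteq L_\infty$ (because $L_0K_{\op{cyc}}/K_{\op{cyc}}$ is unramified), which yields the surjection $X_\infty\twoheadrightarrow X_0$; then, choosing a prime $\mathfrak P$ of $L_\infty$ above $\mathfrak p$, its inertia subgroup $I$ in $G:=\op{Gal}(L_\infty/K)$ meets $X_\infty$ trivially and maps isomorphically onto $\Gamma$ (as $\mathfrak p$ is unramified in $L_\infty/K_{\op{cyc}}$ and totally ramified in $K_{\op{cyc}}/K$), so $G=X_\infty\rtimes I$; finally the conjugation action satisfies $\widetilde\gamma x\widetilde\gamma^{-1}x^{-1}=Tx$ (additively in $X_\infty$) for a topological generator $\widetilde\gamma$ of $I$, which shows that the closed subgroup of $G$ generated by all conjugates of $I$ together with $[G,G]$ equals the closed subgroup generated by $I$ and $TX_\infty$, whose fixed field is $L_0$; hence $X_0\cong X_\infty/TX_\infty$.

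Now I apply the hypothesis $p\nmid h_K$: then $\op{Cl}_p(K)=0$, so by class field theory $X_0\cong\op{Cl}_p(K)=0$, whence $X_\infty=TX_\infty$. Since $\Lambda(\Gamma)=\Z_p\llbracket T\rrbracket$ is a complete local ring with maximal ideal $(p,T)\ni T$ and $X_\infty$ is finitely generated over it, Nakayama's lemma gives $X_\infty=0$. For the last assertion, only finitely many primes divide $h_K$, so for every prime $p$ not splitting in $K$ with $p\nmid h_K$ we obtain $X_\infty(p,K)=0$, and therefore $\lambda_p(K)=0$ via the isomorphism $X_\infty(p,K)\cong\Z_p^{\lambda_p(K)}$ recorded above. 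The only real obstacle is the two standard inputs — verifying total ramification of $\mathfrak p$ (where oddness of $p$ is essential) and the identification $X_\infty/TX_\infty\cong X_0$; granting these, the conclusion falls out of Nakayama's lemma.
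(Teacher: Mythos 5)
Your argument is correct and is mathematically the same as the paper's: the paper simply cites \cite[Proposition 13.22]{washington1997introduction} (which says $p\nmid h_K$ forces $p\nmid h_{K_n}$ for all $n$ when the unique prime above $p$ is totally ramified in $K_{\mathrm{cyc}}/K$), whereas you have unpacked the proof of that proposition — total ramification of $\mathfrak p$, the identification $X_0\cong X_\infty/TX_\infty$, and Nakayama's lemma. No gap; you have just supplied the details of the cited result rather than citing it.
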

\begin{proof}
Let $p$ be a prime which is inert or ramified in $K$. If $p\nmid h_K$, then, $p\nmid h_{K_n}$ for all $n$ (cf. \cite[Proposition 13.22]{washington1997introduction}), and thus, $X_\infty(p, K)$ is identically $0$.
\end{proof}

\section{Main results}
\par In this section, we state and prove the main results of the paper. Recall that $X_\infty$ is the Galois group $\op{Gal}(L_\infty/\op{K}_{\op{cyc}})$ defined in the previous section, and $\lambda_p(K)$ is the associated Iwasawa $\mu$ and $\lambda$ invariants. In order to emphasize the dependence on $p$ and $K$, we shall at times use $X_\infty(p,K)$ to denote $X_\infty$.

\subsection{Cohen--Lenstra heuristics for $p$-ranks and consequences for the Iwasawa $\lambda$-invariant}
\par We study consequences of Conjecture \ref{conjecture} for the distribution of $\lambda_p(K)$ for imaginary quadratic fields $K$ which are ordered according to discriminant. Given a natural number $n$, let $\cF(p, \lambda\geq n)$ be the family of imaginary quadratic fields $K$ for which $\lambda_p(K)\geq n$. We set $\underline{\mathfrak{d}}(p, \lambda\geq n)$ denote the lower density of $\cF(p, \lambda\geq n)$.

\begin{theorem}\label{main theorem}
Let $p$ be an odd prime number and $n>0$ be an integer. Assume that $\cF_{p, \geq n}$ has a density given by \eqref{e1} (as predicted by Conjecture \ref{conjecture}). Then, it follows that  
\begin{equation}\label{e2}\underline{\mathfrak{d}}(p, \lambda\geq n)\geq 1-\prod_{i=1}^\infty \left(1-p^{-i}\right)\left(1+\sum_{j=1}^{n-1} \left(p^{-j^2}\prod_{k=1}^j\left(1-p^{-k}\right)^{-2}\right)\right).\end{equation}
\end{theorem}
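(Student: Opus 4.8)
The plan is to deduce Theorem \ref{main theorem} as a counting consequence of the inequality $\lambda_p(K)\ge r_p(K)$, valid for every imaginary quadratic field $K$ and every odd prime $p$ (this is Lemma \ref{lemma link}), which ties the Iwasawa $\lambda$-invariant of the cyclotomic $\Z_p$-extension to the $p$-rank of the class group. First I would establish this inequality. The key point is that $K_{\op{cyc}}/K$ is unramified outside $p$ and is totally ramified at each prime of $K$ above $p$; this holds whether $p$ splits, is inert, or ramifies in $K$, because $p$ is odd and so $\Q_{\op{cyc}}/\Q$ has no quadratic subfield. Letting $L_0$ denote the maximal unramified abelian pro-$p$ extension of $K$, so that $\op{Gal}(L_0/K)\cong \op{Cl}_p(K)$ by class field theory, one then checks that $L_0\cap K_{\op{cyc}}=K$ and that $L_0K_{\op{cyc}}/K_{\op{cyc}}$ is an unramified abelian pro-$p$ extension, whence $L_0K_{\op{cyc}}\subseteq L_\infty$. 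The restriction map therefore yields a surjection $X_\infty\twoheadrightarrow \op{Gal}(L_0K_{\op{cyc}}/K_{\op{cyc}})\cong \op{Cl}_p(K)$. Since $X_\infty\cong \Z_p^{\lambda_p(K)}$ by the structure result recalled above, reduction modulo $p$ produces a surjection $(\Z/p\Z)^{\lambda_p(K)}\twoheadrightarrow \op{Cl}_p(K)/p\op{Cl}_p(K)\cong (\Z/p\Z)^{r_p(K)}$, and hence $\lambda_p(K)\ge r_p(K)$.

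Granting Lemma \ref{lemma link}, the remainder is bookkeeping with counting functions. The inequality shows that $r_p(K)\ge n$ forces $\lambda_p(K)\ge n$, so $\cF_{p,\ge n}\subseteq \cF(p,\lambda\ge n)$ as families of imaginary quadratic fields, and consequently $\#\cF_{p,\ge n}(x)\le \#\cF(p,\lambda\ge n)(x)$ for every $x>0$. Dividing through by $\#\cFiq(x)$ and taking $\liminf$ as $x\to\infty$, and using the hypothesis that $\mathfrak{d}(\cF_{p,\ge n})$ exists (so that its lower density equals its density), one obtains
\[\underline{\mathfrak{d}}(p,\lambda\ge n)=\liminf_{x\to\infty}\frac{\#\cF(p,\lambda\ge n)(x)}{\#\cFiq(x)}\ \ge\ \lim_{x\to\infty}\frac{\#\cF_{p,\ge n}(x)}{\#\cFiq(x)}=\mathfrak{d}(\cF_{p,\ge n}).\]
Inserting the value of $\mathfrak{d}(\cF_{p,\ge n})$ furnished by \eqref{e1} gives exactly \eqref{e2}.

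The substantive step, and the one I expect to demand the most care, is Lemma \ref{lemma link}: one must verify that the compositum $L_0K_{\op{cyc}}$ stays unramified over $K_{\op{cyc}}$ even when $p$ splits in $K$ and there are two primes of $K$ above $p$. The argument sketched above is uniform in the splitting behavior precisely because each prime above $p$ remains totally ramified in $K_{\op{cyc}}/K$, so that $L_0\cap K_{\op{cyc}}=K$ and the local inertia at a prime above $p$ in $L_0K_{\op{cyc}}/K_{\op{cyc}}$ injects into the (trivial) inertia of $L_0/K$. The transition to densities is formal; its only subtlety is that it bounds the \emph{lower} density of $\cF(p,\lambda\ge n)$ from below — it makes no claim that this family possesses a density, and it does not pin down $\lambda_p(K)$ exactly in terms of $r_p(K)$.
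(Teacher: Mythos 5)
Your proof is correct and follows the same overall strategy as the paper: establish $\lambda_p(K)\ge r_p(K)$ (Lemma \ref{lemma link}), deduce the containment $\cF_{p,\ge n}\subseteq \cF(p,\lambda\ge n)$, and pass from counting functions to lower densities exactly as you do. The only genuine difference lies in how the link lemma is proved. The paper also starts from the surjection $X_\infty\twoheadrightarrow X_0\cong \op{Cl}_p(K)$ (obtained, as in your argument, from total ramification of the primes above $p$ in $K_{\op{cyc}}/K$ and the resulting $L_0\cap K_{\op{cyc}}=K$), but then reduces modulo the maximal ideal $\mathfrak{M}=(p,T)$ to get $r_p(K)\le g_p(K)$, where $g_p(K)$ is the minimal number of $\Lambda(\Gamma)$-generators of $X_\infty$, and proves a separate structural lemma (Lemma \ref{newest lemma}) asserting $\mu+\lambda\ge g$ for any finitely generated torsion $\Lambda(\Gamma)$-module with no nonzero finite submodule; combined with Ferrero--Washington this yields $g_p(K)\le\lambda_p(K)$. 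You instead invoke directly the recalled theorem that $X_\infty(p,K)\cong\Z_p^{\lambda_p(K)}$ and reduce the surjection modulo $p$, obtaining $(\Z/p\Z)^{\lambda_p(K)}\twoheadrightarrow \op{Cl}_p(K)/p\op{Cl}_p(K)$. Your route is shorter and bypasses the most technical lemma in the paper, at the cost of relying on the stronger input $X_\infty\cong\Z_p^{\lambda_p(K)}$ (Washington, Corollary 13.29), which already encapsulates both the vanishing of $\mu$ and the absence of finite submodules; the paper's route isolates those two ingredients and would adapt to settings where freeness of $X_\infty$ over $\Z_p$ is not available. The density bookkeeping at the end, including the observation that only the lower density of $\cF(p,\lambda\ge n)$ is controlled, matches the paper's proof.
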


Note that the product $\prod_{i=1}^\infty \left(1-p^{-i}\right)$ can be expanded in terms powers of $1/p$. We find that 
\[\prod_{i=1}^\infty \left(1-p^{-i}\right)=1-p^{-1}-p^{-2}+p^{-5}+p^{-7}+\dots,\]is very close to $1-p^{-1}$.

\par Given any complex number $q$ such that $|q|<1$, we have the relation
\begin{equation}\label{3.1}
    1+\sum_{j=1}^\infty \frac{q^{j^2}}{\prod_{k=1}^j \left(1-q^k\right)^2}=\prod_{i=1}^\infty \left(1-q^{-i}\right)^{-1},
\end{equation}
cf. \cite[p.21, (2.2.9)]{andrews1998theory}. Setting $q=p^{-1}$ in \eqref{3.1}, we find that the expression
\[1-\prod_{i=1}^\infty \left(1-p^{-i}\right)\left(1+\sum_{j=1}^{n-1} \left(p^{-j^2}\prod_{k=1}^j\left(1-p^{-k}\right)^{-2}\right)\right)\] given on the right hand side of \eqref{e2} is postive but approaches $0$ as $n\rightarrow \infty$. In the following, we let $\mathfrak{M}=(p, T)$ denote the maximal ideal of $\Lambda(\Gamma)$. Given a finitely generated torsion $\Lambda(\Gamma)$-module $X$, let $g(X)$ denote the dimension of $X/\mathfrak{M} X$ over $\Lambda(\Gamma)/\mathfrak{M}$. Let $\mu(X)$ (resp. $\lambda(X)$) be the $\mu$ (resp. $\lambda$) invariants of $X$. Set $g_p(K)$ to denote $g\left(X_\infty(p,K)\right)$. 
\begin{lem}\label{newest lemma}
Let $X$ be a finitely generated torsion $\Lambda$-module. Assume that $X$ has no non-zero finite $\Lambda$-submodules and let $\mu=\mu(X)$ and $\lambda=\lambda(X)$. Then, we have that $\mu+\lambda\geq g(X)$.
\end{lem}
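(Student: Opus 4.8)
The plan is to use the structure theorem for finitely generated torsion $\Lambda$-modules together with the hypothesis that $X$ has no non-zero finite $\Lambda$-submodules, which upgrades the pseudo-isomorphism to an honest injection with finite cokernel. Write $E = \left(\bigoplus_{i=1}^s \Lambda/(p^{\mu_i})\right) \oplus \left(\bigoplus_{j=1}^t \Lambda/(f_j(T)^{\lambda_j})\right)$ for the elementary module attached to $X$. Since $X$ has no non-zero finite $\Lambda$-submodule, the natural pseudo-isomorphism $X \to E$ is injective (a nonzero kernel would be a finite $\Lambda$-submodule), so we get a short exact sequence $0 \to X \to E \to C \to 0$ with $C$ finite.

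The key step is to compare $g(X)$ with $g(E)$. First I would compute $g(E)$ directly: since $\mathfrak{M} = (p,T)$, we have $E/\mathfrak{M}E \cong \bigoplus_{i=1}^s \FF_p \oplus \bigoplus_{j=1}^t \FF_p$ (each cyclic summand $\Lambda/(g)$ contributes exactly one copy of $\FF_p$ modulo $\mathfrak{M}$, because $g \in \mathfrak{M}$ in every case — $p^{\mu_i}$ and $f_j(T)^{\lambda_j}$ all lie in $(p,T)$), so $g(E) = s + t$. Next, counting degrees gives $\mu(E) = \sum_i \mu_i \geq s$ and $\lambda(E) = \sum_j \lambda_j \deg(f_j) \geq \sum_j \lambda_j \geq t$, hence $\mu(E) + \lambda(E) \geq s + t = g(E)$. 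Since $X$ and $E$ are pseudo-isomorphic they have the same $\mu$- and $\lambda$-invariants, so it remains only to show $g(X) \leq g(E)$, i.e. that passing to the submodule $X$ does not increase the $\mathfrak{M}$-corank.

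For that last inequality I would apply the right-exact functor $-\otimes_\Lambda \Lambda/\mathfrak{M}$ to $0 \to X \to E \to C \to 0$, obtaining an exact sequence $\operatorname{Tor}_1^\Lambda(C, \Lambda/\mathfrak{M}) \to X/\mathfrak{M}X \to E/\mathfrak{M}E \to C/\mathfrak{M}C \to 0$. This does not immediately give $g(X) \le g(E)$ because the Tor term can be nonzero, so instead I would argue more carefully: dualize, or use that $C$ finite forces the map $X/\mathfrak{M}X \to E/\mathfrak{M}E$ to have bounded kernel — concretely, one can instead run Nakayama: $X$ is generated by $g(X)$ elements over $\Lambda$, and a minimal generating set of $X$ maps to a generating set of the finite-index submodule $X \subseteq E$; but $E$ needs $s+t$ generators and any generating set of a finite-index submodule of $E$ has size $\geq$ that of a minimal generating set of $E$ (since generators of $X$ generate $E/C$, and $E/\mathfrak{M}E$ is a quotient of $(E/C)/\mathfrak{M}(E/C) \oplus (\text{something from } C)$...). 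The cleanest route, which I expect to be the main obstacle to state crisply, is: $E$ is generated by the images of $X$ together with finitely many elements killed by a power of $\mathfrak{M}$, and one shows via a short diagram chase that $g(E) \le g(X)$ fails in general but $g(X) \le g(E)$ holds — in fact the honest statement needed is just $g(X) \le g(E) = s+t \le \mu + \lambda$. I would establish $g(X) \le g(E)$ by noting $E/X = C$ is finite hence $p^N$-torsion and $T^N$-torsion for some $N$, so $X \supseteq \mathfrak{M}^{2N} E$, giving a surjection $E/\mathfrak{M}^{2N}E \twoheadrightarrow E/X$ and, more to the point, $X/\mathfrak{M}X$ surjects onto $X/(X \cap \mathfrak{M}E) \hookrightarrow E/\mathfrak{M}E$ — but $X \cap \mathfrak{M}E \supseteq \mathfrak{M}X$ with finite quotient, and a cardinality/length comparison bounds things. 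The main obstacle is thus purely the care needed in the commutative-algebra bookkeeping relating minimal generators of $X$ and of $E$ across the finite cokernel; the inequality $\mu + \lambda \ge g(E)$ for the elementary module itself is immediate from the degree count above.
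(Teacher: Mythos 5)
Your overall strategy --- pass to the elementary module $E$ via the structure theorem, compute $g(E)=s+t\le \mu+\lambda$, and then transfer the bound back to $X$ --- founders on exactly the step you flag as the main obstacle: the inequality $g(X)\le g(E)$ is false for a finite-index submodule $X\subseteq E$, even when $X$ has no non-zero finite $\Lambda$-submodules. Concretely, take $E=\Lambda/(p^2)$ and $X=(p,T)/(p^2)\subseteq E$. Then $E/X\cong \Lambda/(p,T)=\mathbb{F}_p$ is finite, $E$ (hence $X$) has no non-zero finite submodules, and $g(E)=1$; but $\mathfrak{M}X$ is the image of $\mathfrak{M}^2$, so $X/\mathfrak{M}X\cong \mathfrak{M}/\mathfrak{M}^2$ is two-dimensional and $g(X)=2$. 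More dramatically, $X=\mathfrak{M}^{n}\cdot\bigl(\Lambda/(p^{n+1})\bigr)$ has $g(X)=n+1$ while $g(E)=1$. Minimal numbers of generators over the two-dimensional local ring $\Lambda$ are not monotone under inclusions of finite colength --- this is precisely the nonvanishing of $\mathrm{Tor}_1^{\Lambda}(C,\Lambda/\mathfrak{M})$ that you noticed --- and none of the fallback arguments you sketch (the surjection $X/\mathfrak{M}X\twoheadrightarrow X/(X\cap\mathfrak{M}E)$ with the latter embedding in $E/\mathfrak{M}E$, or the Nakayama count) yields an \emph{upper} bound on $\dim X/\mathfrak{M}X$. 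Note that in both examples the lemma itself still holds, with $\mu+\lambda=g(X)$ exactly; this shows your intermediate target $g(X)\le s+t$ is strictly stronger than what is true, while the correct bound $g(X)\le\mu+\lambda$ sees the full lengths $\mu_i$ and $\lambda_j\deg f_j$, not just the number of cyclic summands.

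The correct pieces of your argument --- injectivity of $X\to E$ from the no-finite-submodule hypothesis, $g(E)=s+t$, and $\mu(E)+\lambda(E)\ge s+t$ --- therefore do not suffice, and the approach needs to be replaced rather than patched. The paper avoids $E$ entirely: it splits off the $p$-primary torsion submodule $X(p)$, handles $X/X(p)$ by noting it is $\Z_p$-free of rank $\lambda$ so that $g\le\lambda$ there, and handles $X(p)$ by induction on $\mu$, using the no-finite-submodule hypothesis to peel off a free $\Lambda/(p)$-quotient one copy at a time (each copy costing one generator and one unit of $\mu$), together with subadditivity of $g$ in short exact sequences. Any repair of your framework must bound $g(X)$ by the total length data $\sum_i\mu_i+\sum_j\lambda_j\deg f_j$ rather than by the number of summands, which essentially forces a d\'evissage of this kind anyway.
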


\begin{proof}
Let $X(p)$ be the $p$-primary torsion submodule of $X$. The quotient $X/X(p)$ has no non-zero $p$-torsion. Given a short exact sequence of finitely generated and torsion $\Lambda(\Gamma)$-modules
\[0\rightarrow X_1\rightarrow X_2\rightarrow X_3\rightarrow 0,\]
it is easy to see that $g(X_2)\leq g(X_1)+g(X_3)$, and that 
\[\begin{split}
&\mu(X_2)=\mu(X_1)+\mu(X_3),\\
&\lambda(X_2)=\lambda(X_1)+\lambda(X_3).\\
    \end{split}\]
Consider the short exact sequence 
\[0\rightarrow X(p)\rightarrow X\rightarrow X/X(p)\rightarrow 0. \] It suffices to prove the result for $X(p)$ and $X/X(p)$ separately. First, let's consider $X/X(p)$. It is a free $\Z_p$-module of rank $\lambda\left(X/X(p)\right)$, and with $\mu$-invariant equal to $0$. It is clear that $g\left(X/X(p)\right)$ is $\leq \op{rank}_{\Z_p}\left(X/X(p)\right)=\lambda\left(X/X(p)\right)$. This establishes the inequality for $X/X(p)$. Note that since $X$ contains no non-zero finite $\Lambda(\Gamma)$-submodules, it follows that $X(p)$ contains no non-zero finite $\Lambda(\Gamma)$-submodules. Let $n$ be the smallest postive integer such that $X[p^n]=X(p)$. Set $\Omega$ to denote the quotient $\Lambda(\Gamma)/p$. Consider the short exact sequence
\[0\rightarrow X[p^{n-1}]\rightarrow X(p)\rightarrow X'\rightarrow 0,\] where $X'$ is the $\Omega$-module $X(p)/X[p^{n-1}]$. It is easy to see that since $X(p)$ does not contain non-zero finite $\Lambda(\Gamma)$-submodules, it follows that $X'$ does not contain any non-zero finite $\Omega$-submodules. Indeed, if $Y$ is a finite $\Omega$ submodule of $X'$, then the inverse image of $Y$ with respect to the map $X(p)\rightarrow X'$ is a finite $\Lambda(\Gamma)$-module, and hence is equal to $0$. Since $\Omega$ is a principal ideal domain, we find that $X'$ is a direct sum of cyclic $\Omega$-modules of the form $\Omega/(f)$. Therefore, $X'$ is a free $\Omega$-module, and in particular, there is a short exact sequence of $\Lambda(\Gamma)$-modules
\[0\rightarrow X''\rightarrow X(p)\rightarrow \Omega\rightarrow 0.\] Note that $\mu(X'')=\mu(X)-1$ and $X''$ does not contain any non-zero finite $\Lambda$-submodules. The result follows by induction on the $\mu$-invariant. Assume that $\mu(X'')\leq g(X'')$. It is clear that $\mu(\Omega)=g(\Omega)=1$. Then, the inequality $\mu(X(p))\leq g(X(p))$ follows from this. Note that $\lambda(X(p))=0$, and thus, we have deduced the inequality for $X(p)$. The result follows.
\end{proof}

\begin{lem}
Let $K$ be an imaginary quadratic field and let $p$ be an odd prime. Then, we have that $\lambda_p(K)\geq g_p(K)$.   
\end{lem}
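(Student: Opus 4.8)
The plan is to combine Lemma \ref{newest lemma} with the two structural facts about $X_\infty(p,K)$ recalled earlier in the paper, namely the theorem of Ferrero--Washington ($\mu_p(K)=0$) and \cite[Corollary 13.29]{washington1997introduction}, which gives $X_\infty(p,K)\cong \Zp^{\lambda_p(K)}$ as a $\Zp$-module. The first thing I would observe is that, being $\Zp$-torsion-free, $X_\infty(p,K)$ contains no non-zero finite subgroup, and hence \emph{a fortiori} no non-zero finite $\Lambda(\Gamma)$-submodule. (Alternatively, one can simply invoke the general fact that the unramified Iwasawa module along the cyclotomic $\Zp$-extension of a number field has no non-zero finite $\Lambda(\Gamma)$-submodule, cf. \cite[Proposition 13.28]{washington1997introduction}, but for imaginary quadratic fields the $\Zp$-freeness already makes this transparent.)

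Next I would apply Lemma \ref{newest lemma} to the module $X:=X_\infty(p,K)$. It is finitely generated and torsion over $\Lambda(\Gamma)$ (as recalled above), and by the previous paragraph it has no non-zero finite $\Lambda(\Gamma)$-submodule, so the hypotheses of the lemma are met. It therefore yields
\[
\mu_p(K)+\lambda_p(K)\;=\;\mu(X)+\lambda(X)\;\geq\;g(X)\;=\;g_p(K).
\]
Finally, invoking $\mu_p(K)=0$ from the Ferrero--Washington theorem collapses this to $\lambda_p(K)\geq g_p(K)$, which is the assertion.

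There is essentially no serious obstacle in this argument: the substantive work is Lemma \ref{newest lemma}, which is already in hand, and the reduction above is purely formal. The only point demanding (minor) care is the verification that $X_\infty(p,K)$ carries no non-zero finite $\Lambda(\Gamma)$-submodule, and even that is immediate once one cites the $\Zp$-freeness statement; so the write-up should be short.
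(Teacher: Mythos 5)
Your argument is correct and is essentially the paper's own proof: both cite the absence of non-zero finite $\Lambda(\Gamma)$-submodules of $X_\infty$ (the paper via \cite[Proposition 13.28]{washington1997introduction}), apply Lemma \ref{newest lemma}, and then invoke Ferrero--Washington to discard the $\mu$-term. As a minor aside, once you know $X_\infty(p,K)\cong \Z_p^{\lambda_p(K)}$ you could even bypass Lemma \ref{newest lemma} entirely, since $X_\infty/\mathfrak{M}X_\infty$ is a quotient of $X_\infty/pX_\infty\cong (\Z/p\Z)^{\lambda_p(K)}$, giving $g_p(K)\leq \lambda_p(K)$ directly.
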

\begin{proof}
Note that according to \cite[Proposition 13.28]{washington1997introduction}, $X_\infty$ does not contain any finite and non-trivial $\Lambda(\Gamma)$-submodules. It then follows from Lemma \ref{newest lemma} that $\mu_p(K)+\lambda_p(K)\geq g_p(K)$. Since the $\mu$-invariant vanishes, the result follows from this.
\end{proof}

\begin{lem}\label{lemma link}
Let $K$ be an imaginary quadratic field and $p$ be an odd prime. Then, we have that $\lambda_p(K)\geq r_p(K)$. 
\end{lem}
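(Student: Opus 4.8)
The plan is to bound $r_p(K)$ from above by $g_p(K)$ and then invoke the preceding lemma, which already gives $\lambda_p(K)\geq g_p(K)$. Recall that $g_p(K)=\dim_{\F_p}\bigl(X_\infty/\mathfrak{M}X_\infty\bigr)$ with $\mathfrak{M}=(p,T)$, while by definition $r_p(K)=\dim_{\F_p}\bigl(\op{Cl}(K)/p\op{Cl}(K)\bigr)=\dim_{\F_p}\bigl(\op{Cl}_p(K)/p\op{Cl}_p(K)\bigr)$. So it suffices to exhibit a surjection of $\F_p$-vector spaces $X_\infty/\mathfrak{M}X_\infty\twoheadrightarrow \op{Cl}_p(K)/p\op{Cl}_p(K)$.

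First I would observe that the natural map $X_\infty\to X_0$ is surjective. As recorded in Section 2, the map $X_{n+1}\to X_n$ is surjective whenever $K_{n+1}\cap L_n=K_n$; since $[K_{n+1}:K_n]=p$ is prime, $L_n/K_n$ is unramified, and $K_{n+1}/K_n$ is ramified at the primes above $p$, this equality holds for every $n$. (The ramification claim is standard: locally the cyclotomic $\Z_p$-extension of $\Q_p$ is totally ramified, and each completion $K_v$ with $v\mid p$ has degree prime to $p$ over $\Q_p$, so $K_{\op{cyc}}$ is totally ramified over $K$ at every prime above $p$.) Taking the inverse limit of the resulting tower of surjections of finite groups yields surjectivity of $X_\infty=\varprojlim_n X_n\to X_0$; alternatively one may simply cite that $X_\infty$ surjects onto each $X_n\cong\op{Cl}_p(K_n)$.

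Next, $X_0$ is a module over $\Gamma_0=\op{Gal}(K_0/K)=\op{Gal}(K/K)=\{1\}$, so $\Gamma$ acts trivially on it and $T=\gamma-1$ annihilates $X_0$. Hence the surjection $X_\infty\to X_0$ descends to a surjection $X_\infty/\mathfrak{M}X_\infty\twoheadrightarrow X_0/(p,T)X_0=X_0/pX_0$. By class field theory $X_0\cong\op{Cl}_p(K)$, so $g_p(K)=\dim_{\F_p}\bigl(X_\infty/\mathfrak{M}X_\infty\bigr)\geq\dim_{\F_p}\bigl(\op{Cl}_p(K)/p\op{Cl}_p(K)\bigr)=r_p(K)$. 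Combining with the preceding lemma, $\lambda_p(K)\geq g_p(K)\geq r_p(K)$, as claimed.

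I expect the only mildly delicate point to be the justification that $X_\infty\to X_0$ is surjective — essentially a statement about the ramification of $p$ in the cyclotomic tower — while everything else is formal Nakayama-type bookkeeping. If one prefers to bypass even this, one can argue directly from the already-cited isomorphism $X_\infty\cong\Z_p^{\lambda_p(K)}$: any surjection from $\Z_p^{\lambda_p(K)}$ onto the finite group $\op{Cl}_p(K)$ forces $\lambda_p(K)$ to be at least the minimal number of generators of $\op{Cl}_p(K)$, which is exactly $r_p(K)$.
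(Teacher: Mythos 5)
Your proof is correct and follows essentially the same route as the paper: establish that $X_\infty\to X_0$ is surjective via total ramification of the primes above $p$ in $K_{\op{cyc}}/K$ (using that the local degree over $\Q_p$ is prime to $p$), descend to a surjection $X_\infty/\mathfrak{M}X_\infty\to X_0/pX_0$ using the trivial $\Gamma$-action on $X_0$, and conclude $r_p(K)\leq g_p(K)\leq\lambda_p(K)$ from the preceding lemma. Your closing alternative, which bypasses $g_p(K)$ entirely by feeding the surjection $X_\infty\cong\Z_p^{\lambda_p(K)}\twoheadrightarrow\op{Cl}_p(K)$ directly into a minimal-number-of-generators count, is also valid and slightly more economical than the paper's argument.
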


\begin{proof}
Note that $p$ is totally ramified in the cyclotomic $\Z_p$-extension of $\Q$. Let $v$ be a prime of $K$ that lies above $p$. We show first that $v$ is totally ramified in the cyclotomic $\Z_p$-extension of $K$. Let $\pi$ be the prime of above $p$ in $\Q_n$ and $w$ the prime of $K_n$ which lies above $\pi$ and $v$. The ramification index of $w$ over $p$ is divisible by $p^n$ since $p$ is totally ramified in $\Q_n$. On the other hand, the ramification index of $v$ over $p$ is either $1$ or $2$. Since $p$ is odd, it follows that the ramification index of $w$ over $v$ is $p^n$, hence, $v$ in totally ramified in $K_n$ for all $n$. 

\par Since $v$ is unramified in $L_0$, it follows that $L_0\cap K_{\op{cyc}}=K$. Therefore, the natural map $X_\infty\rightarrow X_0$ is a surjection. Since the action of $\Gamma$ on $X_0$ is trivial, this induces a surjection of vector spaces over $\Lambda(\Gamma)/\mathfrak{M}$
\[X_\infty/\mathfrak{M}\rightarrow X_0/p.\]Since $X_0$ is identified with $\op{Cl}_p(K)$, we find that $r_p(K)=\op{dim}_{\Z/p\Z} X_0/p$. Thus, we have shown that $r_p(E)\leq g_p(E)$. It follows from Lemma \ref{lemma link} that $g_p(E)\leq \lambda_p(E)$. Combining both inequalities, we obtain the result. 
\end{proof}

\begin{cor}\label{cor 3.4}
Let $n$ be a natural number and $p$ be an odd prime. Then, $\cF_{p,\geq n}$ is contained in $\cF(p, \lambda\geq n)$.
\end{cor}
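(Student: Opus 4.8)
The plan is to derive this directly from Lemma \ref{lemma link}, which is the substantive input; the corollary itself is essentially a matter of unwinding the two membership conditions. Concretely, I would take an arbitrary $K \in \cF_{p,\geq n}$ and argue that it lies in $\cF(p,\lambda \geq n)$.

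First I would recall the defining property: by definition of $\cF_{p,\geq n}$, membership $K \in \cF_{p,\geq n}$ means precisely that $r_p(K) \geq n$. Next I would invoke Lemma \ref{lemma link}, which asserts $\lambda_p(K) \geq r_p(K)$ for every imaginary quadratic field $K$ and every odd prime $p$. Chaining the two inequalities gives $\lambda_p(K) \geq r_p(K) \geq n$, i.e. $\lambda_p(K) \geq n$, which is exactly the condition defining membership in $\cF(p,\lambda \geq n)$. Since $K$ was an arbitrary element of $\cF_{p,\geq n}$, this shows $\cF_{p,\geq n} \subseteq \cF(p,\lambda \geq n)$, as claimed.

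I do not anticipate any genuine obstacle here: the entire content has been front-loaded into Lemma \ref{lemma link} (and behind it, the chain $r_p(K) \le g_p(K) \le \lambda_p(K)$ together with the Ferrero--Washington vanishing of $\mu_p(K)$). The only point to be slightly careful about is that the families are defined by the $\geq n$ condition rather than the $= n$ condition, so the inclusion is clean and no case distinction on the exact value of $r_p(K)$ is needed. It may also be worth remarking, for the reader's benefit, that this inclusion is what makes Conjecture \ref{conjecture} (via the density $\mathfrak{d}(\cF_{p,\geq n})$ in \eqref{e1}) transfer to a lower bound for $\underline{\mathfrak{d}}(p,\lambda \geq n)$, which is the route taken in the proof of Theorem \ref{main theorem}.
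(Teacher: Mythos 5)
Your proof is correct and matches the paper's own argument exactly: take $K \in \cF_{p,\geq n}$, apply Lemma \ref{lemma link} to get $\lambda_p(K) \geq r_p(K) \geq n$, and conclude membership in $\cF(p,\lambda\geq n)$. Nothing further is needed.
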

\begin{proof}
Let $K$ be an imaginary quadratic field in $\cF_{p,\geq n}$. Since $p$ is odd we have from Lemma \ref{lemma link} that $\lambda_p(K)\geq r_p(K)\geq n$. Therefore $K$ is contained in $\cF(p, \lambda\geq n)$, and this proves the result.
\end{proof}
We commence with the proof of Theorem \ref{main theorem}.
\begin{proof}[of Theorem \ref{main theorem}]
 We recall that Corollary \ref{cor 3.4} establishes that $\cF(p, \lambda\geq n)$ contains $\cF_{p, \geq n}$, and therefore, we deduce that $\underline{\mathfrak{d}}(p, \lambda\geq n)\geq \underline{\mathfrak{d}}(\cF_{p, \geq n})$. On the other hand, it is assumed that $\mathfrak{d}(\cF_{p, \geq n})$ exists and equals \[1-\prod_{i=1}^\infty \left(1-p^{-i}\right)\left(1+\sum_{j=1}^{n-1} \left(p^{-j^2}\prod_{k=1}^j\left(1-p^{-k}\right)^{-2}\right)\right).\] Therefore, we find that 
 \[\begin{split}\underline{\mathfrak{d}}(p, \lambda\geq n)&\geq \underline{\mathfrak{d}}(\cF_{p, \geq n})=\mathfrak{d}(\cF_{p, \geq n})\\
 &=1-\prod_{i=1}^\infty \left(1-p^{-i}\right)\left(1+\sum_{j=1}^{n-1} \left(p^{-j^2}\prod_{k=1}^j\left(1-p^{-k}\right)^{-2}\right)\right).\end{split}\]
\end{proof}

\subsection{Unconditional results}
\par In this final subsection, we deduce some unconditional results that do not assume the Cohen--Lenstra heuristic \eqref{e1} for $\cF_{p, \geq n}$. A number of authors have studied asymptotics for $\cF_{p, \geq n}(x)$, and a number of unconditional results are known cf. \cite{ craig1977construction,byeon2006imaginary, luca2008class,levin2019quadratic, yu2020imaginary}. Corollary \ref{cor 3.4} allows one to translate such results into results about the family $\cF(p, \lambda\geq n)$. For ease of notation, we shall set $\cF(p, \lambda\geq n;x)$ to denote $\cF(p, \lambda\geq n)(x)$. Given a positive integer $m>0$, let $\cF_{m, \geq n}$ be the set of all imaginary quadratic fields such that $\op{Cl}(K)$ contains $\left(\Z/m\Z\right)^n$. Given two functions $f(x)$ and $g(x)$ of $x\in [0, \infty)$, we write $f(x)\gg g(x)$ if there is a constant $C>0$ for which $f(x)\geq C g(x)$ for all $x\geq 0$.

\begin{theorem}[Byeon, Craig]\label{theorem 3.5}
Given a natural number $m$, one has that 
\begin{equation}\label{Byeon eqn}\cF_{m, \geq 2}(x)\gg x^{\frac{1}{m}-\epsilon}.\end{equation}
Furthermore, for $m=3$, $\cF_{3, \geq 4}$ is infinite.
\end{theorem}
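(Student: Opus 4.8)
The plan is to deduce both statements from the constructions of Byeon \cite{byeon2006imaginary} and Craig \cite{craig1977construction}; I indicate the underlying ideas, since the arguments themselves belong to \emph{loc. cit.} and are not reproduced here. So the proof will, in the end, be a matter of quoting the appropriate theorems, and the substantive content lies in those references.

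For the bound \eqref{Byeon eqn} the approach — going back to Yamamoto, refined quantitatively by Byeon — is to exhibit a one-parameter polynomial $D(t)\in\Z[t]$, of degree roughly $m$, such that for a positive proportion of integer specializations $d=D(t_0)$ the imaginary quadratic field $\Q(\sqrt{-d})$ has class group containing $(\Z/m\Z)^2$. Concretely, one writes down, identically in $t$, two norm relations of the shape
\[
a_i(t)^2 + D(t)\,b_i(t)^2 = c_i\,\ell_i^{\,m},\qquad i=1,2,
\]
(possibly with an auxiliary factor $4$), where $\ell_1,\ell_2$ are fixed coprime primes; each relation forces a prime ideal above $\ell_i$ in $\Q(\sqrt{-d})$ to have order dividing $m$ in the class group. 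One then shows that, after imposing finitely many congruence conditions on $t_0$, the two resulting ideal classes have exact order $m$ and together generate a subgroup isomorphic to $(\Z/m\Z)^2$, so that $d=D(t_0)\in\cF_{m,\geq 2}$. Since $\deg D\asymp m$, the squarefree values of $D$ that are $\leq x$ correspond to parameters $t_0\ll x^{1/m}$; combining a squarefree sieve for polynomial values with the fact that $t_0\mapsto\Q(\sqrt{-D(t_0)})$ is finite-to-one then yields $\cF_{m,\geq 2}(x)\gg x^{1/m-\epsilon}$.

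For the final assertion I would invoke Craig's explicit family \cite{craig1977construction}: it produces infinitely many imaginary quadratic fields $\Q(\sqrt{-d})$, with $d$ of a suitably chosen form, whose class group contains a subgroup isomorphic to $(\Z/3\Z)^4$. The four generating classes come from four prime ideals $\mathfrak{l}_1,\dots,\mathfrak{l}_4$ with $\mathfrak{l}_j^{3}$ principal, and their independence modulo cubes is verified by an explicit computation with cubic residue symbols. As the family is infinite, $\cF_{3,\geq 4}$ is infinite.

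In both parts the genuinely delicate point is the same: verifying that the constructed ideal classes are independent to the asserted extent, i.e., ruling out an unexpected multiplicative relation among the $\mathfrak{l}_i^{m}$ that would force the $m$-rank below the claimed value. This is exactly what the parameter choices in \emph{loc. cit.} are engineered to guarantee, via a full-rank condition on the relevant matrix of power-residue symbols; and, for the first part, one additionally needs the standard lower bound for the number of squarefree values of a polynomial together with a bookkeeping check that distinct parameters give, up to $O(1)$ multiplicity, distinct fundamental discriminants.
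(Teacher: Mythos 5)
Your proof matches the paper's: the paper also proves this theorem purely by citation, attributing \eqref{Byeon eqn} to Byeon \cite{byeon2006imaginary} and the infinitude of $\cF_{3,\geq 4}$ to Craig \cite{craig1977construction}. Your additional sketch of the Yamamoto-style polynomial constructions and the independence verification is accurate background but plays the same role as the paper's bare references.
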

\begin{proof}
The first result \eqref{Byeon eqn} is due to Byeon \cite{byeon2006imaginary}. For the second result, see \cite{craig1977construction}.
\end{proof}

\begin{theorem}\label{unconditional}
Let $S=\{p_1, \dots, p_k\}$ be a finite set of odd primes, and let $m$ be the product $p_1\dots p_k$. The following assertions hold.
\begin{enumerate}
    \item Let $\cF_S$ be the family of imaginary quadratic fields $K$ such that $\lambda_p(K)\geq 2$ for all $p\in S$. In this case, we have that
    \[\cF_S(x)\gg x^{\frac{1}{m}-\epsilon},\] for any choice of $\epsilon>0$.
\item Suppose that $S=\{3\}$; then there are infinitely many imaginary quadratic fields $K$ for which $\lambda_3(K)\geq 4$.
\end{enumerate} 
\end{theorem}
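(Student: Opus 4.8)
The plan is to deduce Theorem \ref{unconditional} directly from the combination of Corollary \ref{cor 3.4} (equivalently Lemma \ref{lemma link}) and Theorem \ref{theorem 3.5}, so that almost no new argument is needed beyond bookkeeping about ranks. The key observation is that if $\op{Cl}(K)$ contains $(\Z/m\Z)^n$ with $m=p_1\cdots p_k$, then for each prime $p_i$ the group $\op{Cl}(K)$ contains $(\Z/p_i\Z)^n$, and hence $r_{p_i}(K)\geq n$; in other words $\cF_{m,\geq n}\subseteq \cF_{p_i,\geq n}$ for every $i$. This is the only place where the product structure of $m$ is used.

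For part (1), I would argue as follows. Take any $K\in\cF_{m,\geq 2}$. By the rank remark above, $r_{p}(K)\geq 2$ for every $p\in S$, so by Corollary \ref{cor 3.4} (or Lemma \ref{lemma link} applied prime by prime, using that each $p_i$ is odd) we get $\lambda_{p}(K)\geq r_p(K)\geq 2$ for every $p\in S$. Hence $K\in\cF_S$, which shows $\cF_{m,\geq 2}\subseteq \cF_S$ and therefore $\cF_{m,\geq 2}(x)\leq \cF_S(x)$ for all $x$. Combining with the bound $\cF_{m,\geq 2}(x)\gg x^{1/m-\epsilon}$ from \eqref{Byeon eqn} of Theorem \ref{theorem 3.5} gives $\cF_S(x)\gg x^{1/m-\epsilon}$, as claimed.

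For part (2), set $m=3$ and apply the second assertion of Theorem \ref{theorem 3.5}: the family $\cF_{3,\geq 4}$ of imaginary quadratic fields $K$ with $\op{Cl}(K)\supseteq(\Z/3\Z)^4$ is infinite. For each such $K$ one has $r_3(K)\geq 4$, and since $3$ is odd, Lemma \ref{lemma link} gives $\lambda_3(K)\geq r_3(K)\geq 4$. Thus every field in the infinite family $\cF_{3,\geq 4}$ lies in $\cF(3,\lambda\geq 4)$, so there are infinitely many imaginary quadratic fields $K$ with $\lambda_3(K)\geq 4$.

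There is essentially no serious obstacle here: the content is entirely in the previously-established Lemma \ref{lemma link} and in the cited results of Byeon and Craig. The only mild subtlety to state carefully is the containment $\cF_{m,\geq n}\subseteq \cF_{p_i,\geq n}$, i.e. that a subgroup $(\Z/m\Z)^n$ of $\op{Cl}(K)$ forces $p_i$-rank at least $n$ for each prime divisor $p_i$ of $m$; this follows because $(\Z/m\Z)^n[p_i]\cong(\Z/p_i\Z)^n$ and the $p_i$-rank is the dimension of the $p_i$-torsion of the maximal elementary abelian $p_i$-quotient, which is at least that of any subgroup. Once this is noted the theorem is immediate.
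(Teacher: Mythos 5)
Your proof is correct and follows essentially the same route as the paper: both reduce the theorem to the containment $\cF_{m,\geq 2}\subseteq\cF_S$ (via Lemma \ref{lemma link}/Corollary \ref{cor 3.4}) together with the cited results of Byeon and Craig. Your version is in fact slightly more careful, since you spell out the elementary containment $\cF_{m,\geq n}\subseteq\cF_{p_i,\geq n}$ for each prime divisor $p_i$ of $m$, which the paper leaves implicit.
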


\begin{proof}
Corollary \ref{cor 3.4} asserts that $\cF_{p,\geq 2}$ is contained in $\cF(p, \lambda\geq 2)$ for all odd primes $p$. Note that by definition, $\cF_S$ is the intersection of the sets $\cF(p, \lambda\geq 2)$ as $p$ ranges over $S$. Therefore $\cF_{m, \geq 2}$ is contained in $\cF_S$. The result is thus a direct consequence of Corollary \ref{cor 3.4} and Theorem \ref{theorem 3.5}. 
\end{proof}

\begin{rem}\label{remark}
The above result should be contrasted to the main result in \cite{sands1993non}, in which it is shown that for any odd prime number $p$, there is an infinitude of imaginary quadratic fields $K$ such that $\lambda_p(K)\geq 2$. Theorem \ref{unconditional} is to be viewed as a refinement of this results for the following reasons.
\begin{enumerate}
    \item It applies to any finite set of primes $S$, and not just a single prime.
    \item It implies that $\lambda_3(K)\geq 4$ for an infinitude of imaginary quadratic fields $K$.
\end{enumerate}
\end{rem}

\begin{rem}
At the time this note was written, there were many interesting unpublished results that further refined Theorem \ref{theorem 3.5}, see \cite{levin2019quadratic, kulkarni2021hilbert,chattopadhyay2021p}. All of these results potentially lead to improvements and variations in the statement of Theorem \ref{unconditional}.
\end{rem}

\bibliographystyle{alpha}
\bibliography{references}
\end{document}